\newtheorem{theorem}{Theorem}[section]
\newtheorem{lemma}{Lemma}[section]
\newtheorem{corollary}{Corollary}[section]
\newtheorem{definition}{Definition}[section]
\newtheorem{example}{Example}[section]
\numberwithin{equation}{section}
\def\R{\mathbb{R}}
\def\C{\mathbb{C}}
\def\M{\mathbb{M}}
\def\N{\mathbb{N}}
\def\G{\mathbb{G}}
\def\0{{\bf{0}}}
\def\vx{{\vec{x}}}
\def\vy{{\vec{y}}}
\def\vu{{\vec{u}}}
\def\vv{{\vec{v}}}
\def\vk{{\vec{k}}}
\def\vj{{\vec{j}}}
\def\vi{{\vec{i}}}
\def\va{{\vec{a}}}
\def\vb{{\vec{b}}}
\def\vc{{\vec{c}}}
\def\ve{{\vec{e}}}
\def\ve{{\vec{e}}}
\def\vv{{\vec{v}}}
\def\vu{{\vec{u}}}
\def\vw{{\vec{w}}}
\def\vx{{\vec{x}}}
\def\vy{{\vec{y}}}
\title{Euclidean Algebras}
\author{Xingde Dai}
\address{Xingde Dai, The University of North Carolina at Charlotte, Charlotte, NC 28223, USA}
\email{xdai@uncc.edu}
\author{Wei Huang}
\address{Wei Huang, Wells Fargo Bank}
\email{wei.huang@wellsfargo.com}
\begin{document}


\begin{abstract}
We introduce a new example of unital commutative $n$-dimensional group algebra $\R_n$ for $n \geq 2$.
The algebra $\R_n$ and the complex numbers $\C$ are astonishingly alike.
The zero divisor set of the algebra has Lebesgue $\mu_n$-measure zero.
The formula for the Haar measure is established.
Also, the analytic function theory in $\R_n,$ for $n=2k$ that similar to the classical theory in $\C$
is introduced.
This includes the Cauchy-Riemann equations, mean-value theorem and Louisville theorem.
\end{abstract}

\keywords{Euclidean space \and commutative algebras \and field theory}
\subjclass[2000]{MSC12E99 \and MSC13A99}

\maketitle

\section{Introduction} \label{intro}

In this paper, we introduce a new example of unital commutative $n$-dimensional group algebra $\R_n$ for $n \geq 2$.
When $n=2$, the result is the complex numbers system $\C$.
The algebra $\R_n$ and the complex numbers $\C$ are astonishingly alike.
We name $\R_n$ the \textit{Euclidean algebra} because the definition of new multiplication 
works for \textit{all} Euclidean space over $\R$ in a natural way.
The algebra has zero divisors  of Lebesgue $\mu_n$-measure zero.
The formula for the Haar measure is established.
We introduced the analytic function theory in $\R_n,$ for $n=2k$ that similar to the classical theory in $\C$.
This includes the Cauchy-Riemann equations, mean-value theorem and Louisville theorem.
In Section \ref{ss:hn} We compare $\R_n$ with other commutative hypercomplex number systems in literatures.

\section{Multiplication in $\R^n$} \label{ss:multiplication}

Let $n\in\N, n\geq 2$ and $\R^n$ be the $n$-dimensional Euclidean space over $\R$ and $\{\ve_1,\cdots, \ve_n\}$ be the standard basis of $\R^n$.
Define a mapping $\mathfrak{g}:\{\ve_1,\cdots, \ve_n\}\rightarrow \R^n$ as
\begin{align*}
    \left\{
       \begin{array}{llc}
         \mathfrak{g}\ve_1 & =&-\ve_{n}\\
         \mathfrak{g}\ve_2 & = &\ve_1\\
         \cdots & \cdots & \\
         \mathfrak{g}\ve_{n} &= &\ve_{n-1}\\
       \end{array}
     \right.
\end{align*}
Let $I_n$ denote the identity mapping (matrix) on the real $n$-dimensional Euclidean space $\R^n.$
It is clear that $\mathfrak{g}^n=-I_n$ and $\mathfrak{g}^{2n}=I_n$.
The set $\{\mathfrak{g}^\ell \mid \ell =1,2,\cdots, 2n\}$ is cyclic, hence it is an abelian group of order $2n$.
We write $\mathfrak{g}^0 = \mathfrak{g}^{2n} = I_n$.
For $0<\ell\leq n-1$, the matrix form for the mapping $\mathfrak{g}^\ell$ is
\begin{align*}
    \left[\begin{array}{cc}
    O & I_{n-\ell}  \\
    -I_\ell & O'
    \end{array}\right]
\end{align*}
where $I_{n-\ell}$ is the $(n-\ell)\times (n-\ell)$ identity matrix, $I_\ell$ is the $\ell\times \ell$ identity matrix
and $O$ a zero matrix.

Since $\mathfrak{g}^{n+k}= - \mathfrak{g}^{k}$.
The linear span of the group $\mathfrak{G}_n\equiv\{\mathfrak{g}^\ell \mid \ell =1,2,\cdots, 2n\}$ over $\R$ has dimension $n$.
Let $\M_n$ denote this linear span.
The set $\{\mathfrak{g}^0,\mathfrak{g}^1,\cdots, \mathfrak{g}^{n-1}\}$ is a basis for $\M_n$,
 a linear subspace of the $n\times n$ real matrix space.
It is clear that $\M_n$ is closed under matrix multiplication.
Since the group $\{\mathfrak{g}^\ell \mid \ell =1,2,\cdots, 2n\}$ is abelian, the space $\M_n$ with the matrix multiplication is a commutative algebra of dimension $n$
with multiplicative identity $I_n$. Let $A$ be an element in $\M_n$ with $\det A \neq 0$. The inverse matrix $A^{-1}$ exists.
By the Cayley-Hamilton theorem, $A^{-1}$
is a linear combination of $\{I_n, A,A^2,\cdots,A^{n-1}\}$, hence $A^{-1}$ is also an element of $\M_n$.

Define the mapping $\varsigma:$ 
\begin{align*}
 \varsigma : \ve_\ell \mapsto \mathfrak{g}^{\ell-1}, \ell=1,2,\cdots, n.
\end{align*}
The mapping $\varsigma$ is a bijection between bases of $\R^n$ and $\M_n$.
The mapping $\varsigma$ extends to an isomorphism from $\R^n$ onto $\M_n$ as linear spaces over $\R$.
For an element $\vu =[u_1,u_2,\cdots, u_{n-1},u_n] \in \R^n$, we have
\begin{align*}
    \varsigma(\vu) = \varsigma\big(\sum_{\ell=1}^{n} u_\ell \ve_\ell\big) = \sum_{\ell=1}^{n} u_\ell \mathfrak{g}^{\ell-1}\in \M_n.
\end{align*}

For $\vu,\vv \in \R^n$, define $\vu\circledast\vv$ as
\begin{align*}
    \vu\circledast\vv \equiv \varsigma^{-1}\big(\varsigma (\vu) \varsigma(\vv)\big).
\end{align*}

\begin{theorem}\label{thm:rfield}
The Euclidean space $\R^n, n \geq 2$ equipped with the multiplication $\circledast$ is a commutative algebra with multiplicative identity $\ve_1$.
\end{theorem}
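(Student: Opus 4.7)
The plan is to prove this by transporting the algebra structure from $\M_n$ to $\R^n$ through the linear isomorphism $\varsigma$. Since the preceding discussion already establishes that $\M_n$ is a commutative unital algebra under matrix multiplication (with identity $I_n$), and that $\varsigma:\R^n\to\M_n$ is an $\R$-linear bijection, essentially every algebraic property needed for $(\R^n,\circledast)$ follows by pulling back through $\varsigma$ and $\varsigma^{-1}$.

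First I would verify that $\circledast$ is well-defined and bilinear. Well-definedness is immediate because $\M_n$ is closed under matrix multiplication, so $\varsigma(\vu)\varsigma(\vv)\in\M_n$ and $\varsigma^{-1}$ can be applied. Bilinearity over $\R$ follows from the fact that $\varsigma$ and $\varsigma^{-1}$ are both $\R$-linear, combined with the bilinearity of matrix multiplication: for $\alpha,\beta\in\R$ and $\vu,\vv,\vw\in\R^n$,
\begin{align*}
(\alpha\vu+\beta\vv)\circledast\vw &= \varsigma^{-1}\bigl(\varsigma(\alpha\vu+\beta\vv)\varsigma(\vw)\bigr)\\
&= \varsigma^{-1}\bigl((\alpha\varsigma(\vu)+\beta\varsigma(\vv))\varsigma(\vw)\bigr)\\
&= \alpha\,\vu\circledast\vw+\beta\,\vv\circledast\vw,
\end{align*}
and symmetrically in the second argument.

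Next I would check associativity and commutativity. Associativity follows by writing out
\[
(\vu\circledast\vv)\circledast\vw = \varsigma^{-1}\bigl(\varsigma(\vu)\varsigma(\vv)\varsigma(\vw)\bigr) = \vu\circledast(\vv\circledast\vw),
\]
using the fact that $\varsigma\circ\varsigma^{-1}$ is the identity on $\M_n$ and matrix multiplication is associative. Commutativity is inherited from the commutativity of $\M_n$, which in turn comes from the abelian group $\mathfrak{G}_n$. Finally, for the identity element, since $\varsigma(\ve_1)=\mathfrak{g}^{0}=I_n$, one has
\[
\ve_1\circledast\vu = \varsigma^{-1}\bigl(I_n\,\varsigma(\vu)\bigr) = \varsigma^{-1}(\varsigma(\vu)) = \vu,
\]
and similarly on the right.

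There is no real obstacle here: the entire argument is a routine transport-of-structure, and all the nontrivial work has already been done upstream in showing that $\{\mathfrak{g}^0,\ldots,\mathfrak{g}^{n-1}\}$ spans a commutative subalgebra of the matrix algebra. If there is any subtle point worth emphasizing, it is simply recording that $\varsigma$ was defined so that $\ve_1\mapsto I_n$, which is precisely what forces $\ve_1$ to be the multiplicative unit for $\circledast$.
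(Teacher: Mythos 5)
Your proposal is correct and follows essentially the same route as the paper: transporting the commutative unital algebra structure of $\M_n$ back to $\R^n$ via the linear isomorphism $\varsigma$, checking associativity, commutativity, distributivity (your bilinearity), and the identity $\ve_1 \mapsto I_n$ exactly as the paper does. No substantive difference to report.
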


\begin{proof}
Let $\vu,\vv,\vw$ be elements in $\R^n$.
\begin{description}
  \item[1. Associative Property]
  By definition we have $\varsigma(\vu\circledast\vv) = \varsigma (\vu) \varsigma(\vv)$ and
  $\varsigma(\vv\circledast\vw) = \varsigma (\vv) \varsigma(\vw)$. Also the multiplication in $\M_n$ satisfies the
  Associative Law. We have
  \begin{align*}
    (\vu\circledast\vv) \circledast \vw &= \varsigma^{-1}\big(\varsigma (\vu\circledast \vv) \varsigma(\vw)\big)\\
                            &= \varsigma^{-1}\big((\varsigma (\vu)\varsigma(\vv)) \varsigma(\vw)\big)\\
                            &= \varsigma^{-1}\big(\varsigma (\vu)(\varsigma(\vv) \varsigma(\vw))\big)\\
                            &= \varsigma^{-1}\big(\varsigma (\vu) \varsigma(\vv \circledast \vw)\big)\\
                            &=     \vu\circledast(\vv \circledast \vw).
  \end{align*}
i.e.
  \begin{align*}
    (\vu\circledast\vv) \circledast \vw = \vu\circledast(\vv \circledast \vw).
  \end{align*}

  \item[2. Commutative Property]
  Since the multiplication in $\M_n$ is commutative, $\varsigma (\vu) \varsigma(\vv)=\varsigma (\vv) \varsigma(\vu)$, we have
  \begin{align*}
    \vu\circledast\vv = \varsigma^{-1}\big(\varsigma (\vu) \varsigma(\vv)\big) =
    \varsigma^{-1}\big(\varsigma (\vv) \varsigma(\vu)\big) = \vv\circledast\vu .
  \end{align*}
  \item[3. Distributive Property]
  Based on the facts that $\varsigma$ and $\varsigma^{-1}$ are isomorphisms of $\R^n$ and $\M_n$ as linear spaces over $\R$,
  also the fact that $\M_n$ is an algebra satisfying the Distributive Law, we have
    \begin{align*}
        \vw \circledast (\vu+\vv) &= \varsigma^{-1}\big(\varsigma (\vw) \varsigma(\vu+\vv)\big)\\
                            &= \varsigma^{-1}\big(\varsigma (\vw) \big(\varsigma(\vu)+\varsigma(\vv)\big)\big)\\
                            &= \varsigma^{-1}\big(\varsigma (\vw) \varsigma(\vu)+\varsigma (\vw)  \varsigma(\vv)\big)\\
                            &= \varsigma^{-1}\big(\varsigma (\vw) \varsigma(\vu)\big)+\varsigma^{-1}\big(\varsigma (\vw) \varsigma(\vv)\big)\\
                            &= \vw \circledast \vu+\vw \circledast \vv.
    \end{align*}
i.e.
    \begin{align*}
        \vw \circledast (\vu+\vv) = \vw \circledast \vu+\vw \circledast \vv.
    \end{align*}

  \item[4. Multiplicative Identity]
  The multiplicative identity in $\M_n$ is $I_n=\mathfrak{g}^0$ and the corresponding element in $\R^n$ is $\varsigma^{-1}(\mathfrak{g}^0)=\ve_1=[1,0,\cdots,0]$.
  We have
    \begin{align*}
        \vu \circledast \ve_1 = \ve_1 \circledast \vu = \varsigma^{-1} \big( I_n  \varsigma (\vu) \big)=\varsigma^{-1} \big( \varsigma (\vu) \big)=\vu.
    \end{align*}
\end{description}
The Theorem \ref{thm:rfield} is proved.
\end{proof}

We call this algebra the \emph{Real Euclidian Algebra}. It is denoted by $\R_n$.
The multiplication table for $\circledast$ is
\begin{equation}\label{eqn:mt}
\begin{tabular}{|l||l|l|l|l|l|l|l|}
  \hline
  $\circledast$ & $\ve_1$ & $\ve_2$ & $\ve_3$ &... & $\ve_{n-2}$& $\ve_{n-1}$ & $\ve_n$ \\
          \hline
          \hline
  $\ve_1$ & $\ve_1$ &$\ve_2$& $\ve_3$ &... & $\ve_{n-2}$& $\ve_{n-1}$& $\ve_n$ \\
  $\ve_2$ & $\ve_2$ &$\ve_3$ & $\ve_4$ &...& $\ve_{n-1}$& $\ve_n$& -$\ve_1$ \\
  $\ve_3$ & $\ve_3$ &$\ve_4$ & $\ve_5$ &...& $\ve_n$& -$\ve_1$& -$\ve_2$ \\
  ... & ...& ...& ...& ... &  ... &  ...&  ...\\
   $\ve_{n-2}$ & $\ve_{n-2}$  & $\ve_{n-1}$ & $\ve_n$ &...& -$\ve_{n-5}$& -$\ve_{n-4}$& -$\ve_{n-3}$  \\
  $\ve_{n-1}$ & $\ve_{n-1}$  & $\ve_n$ & -$\ve_1$ &...& -$\ve_{n-4}$& -$\ve_{n-3}$& -$\ve_{n-2}$  \\
  $\ve_n$ & $\ve_n$ & -$\ve_1$ &-$\ve_2$ & ...& -$\ve_{n-3}$& -$\ve_{n-2}$ & -$\ve_{n-1}$ \\
  \hline
\end{tabular}
\end{equation}

For $\vx,\vy\in\R_n$, if there is no confusion in the context,
we write $\vx\vy$ for $\vx\circledast\vy$ and $\vx^m$ for the product of $m$-multipliers $\vx\circledast \vx \circledast \cdots \circledast \vx$.
We reserve the notation $\vx \cdot \vy$ for the dot product. For $\va_m, \cdots \va_1,\va_0 \in \R_n$ the polynomial is well defined:
\begin{align*}
    p(\vx)=\va_{m} \vx^{m} +\va_{m-1} \vx^{m-1} +\cdots + \va_{1} \vx +\va_{0}.
\end{align*}

\section{Remarks}

We have some notes in order.
\begin{enumerate}
    \item The algebra $\R_n$ is the Euclidean space $\R^n$ with the standard basis $\{\ve_k \mid k=1,\cdots n\}$ and the basis follows the multiplication table \eqref{eqn:mt}.
    In particular, $\ve_1$ is the multiplicative identity $1$.
    As a Euclidean space over $\R$,
    for $\vx =[x_1,\cdots, x_n]$ and $\va =[a_1,\cdots, a_n]$ in $\R_n$, the distance is
    \begin{align*}
    |\vx-\va|=\sqrt{\sum_{j=1}^n (x_j-a_j)^2}.
    \end{align*}
    Also, the Lebesgue measure $\mu_n$ is naturally defined in $\R_n$.
    \item Let $\vx\in\R_n$, define the norm of $\|\vx\|$ as
        \begin{align*}
            \|\vx\| \equiv \big| \det \varsigma(\vx)  \big|.
        \end{align*}
        Then by the matrix multiplication properties, for $\vx,\vy\in\R_n$
        \begin{align*}
            \|\vx \circledast\vy\| = \big| \det \varsigma (\vx)\det \varsigma(\vy) \big|= \|\vx\| \|\vy\|
        \end{align*}

    \item For $n\geq 2$, the group $\mathfrak{G}_n$ is
    \begin{align*}
    \mathfrak{G}_n=\{\mathfrak{g},\mathfrak{g}^2,\cdots, \mathfrak{g}^{n}=-1, \mathfrak{g}^{n+1}, \cdots,\mathfrak{g}^{2n}=1\}.
    \end{align*}
    When $n=2$,  the part between $\mathfrak{g}$ and $-1$ as well as the part between $\mathfrak{g}^{n+1}$ and $1$ reduced into empty set. So $\mathfrak{G}_2=\{\mathfrak{g},\mathfrak{g}^2,\mathfrak{g}^3,\mathfrak{g}^4\}=\{\mathfrak{g},-1,-\mathfrak{g},1\}$. This is the group
    $\{i,-1,-i,1\}$ where $i$ is the imaginary unit in $\C$. So $\C$ is a natural member of $\R_n$.
    \item Let $\vu=[u_1,\cdots, u_n]\in \R_n$. Then
\begin{align*}
    \vu&=[u_1,\cdots, u_n]\\
    &=\|\vu\| \left[\frac{u_1}{\|\vu\|},\cdots, \frac{u_n}{\|\vu\|}\right]\\
    &=\|\vu\| ~\big[\cos \theta_1,\cdots, \cos \theta_n \big],
\end{align*}
where $\theta_k, k=1,\cdots, n$ is the angle between $\vu$ and $\ve_k$.

    For $n=2$,  we have $\varsigma(\R_2)=\M_2$.
\begin{align*}
   \M_2= \left\{\left[
        \begin{array}{cc}
          a     & b \\
          \textrm{-}b    & a \\
        \end{array}
      \right]
    \mid a,b\in \R\right\}
\end{align*}
Also, $\det \left[
        \begin{array}{cc}
          a     & b \\
          \textrm{-}b    & a \\
        \end{array}
      \right]
     =a^2+b^2$. So an element $[a,b]\in \R_2$ is a zero divisor iff $a=b=0$. i.e. $\R_2$ has no non-trivial zero divisors.
Hence $\R_2$ is a $2$-dimensional commutative division algebra. By the Frobenius theorem (1878) \cite{frobenius}
the algebra $\R_2$ must be isomorphic to $\C$ over $\R$.

We have
\begin{align*}
    \vu =[u_1,u_2] & =\sqrt{u_1^2+u_2^2} \left[\frac{u_1}{\sqrt{u_1^2+u_2^2}},\frac{u_2}{\sqrt{u_1^2+u_2^2}} \right]\\
    &=\sqrt{u_1^2+u_2^2} ~\left[\cos \theta_1,\cos \theta_2 \right],
\end{align*}
Note that in the case $n=2, \theta_2=\frac{\pi}{2}-\theta_1$. This is the formula $z=|z|(\cos \theta+i\sin \theta)$ in complex numbers.

    \item Let $\vu,\vv\in\R_n$ and $\vv_1,\vv_2,\cdots,\vv_n$ be the $n$ row vectors in the matrix $\varsigma(\vv)^\tau$, the transpose matrix of $\varsigma(\vv)$.
    It is clear we have
    \begin{align*}
        |\vv_j|=|\vv|, \forall j\leq n.
    \end{align*}
Then
by the multiplication table we have
\begin{align*}
    \vu\vv=\vu \circledast \vv & = \big[\vu\cdot\vv_1,\cdots,\vu\cdot\vv_n\big]
     = \vu\varsigma(\vv)=\vv\vu,
\end{align*}
the $j^{th}$ component of $\vu \circledast \vv$ is $\vu\cdot \vv_j$, where $``\cdot"$ is the dot product in $\R^n$.
Hence we have

\begin{lemma}\label{lm:linear}
Let $\vu, \vv \in\R_n$, then
\begin{align}\label{eq:linear}
    \vu\circledast\vv = \vu \vv = \vv\varsigma(\vu).
\end{align}
\end{lemma}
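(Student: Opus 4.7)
The plan is to unwind the definition $\vu\circledast\vv = \varsigma^{-1}\bigl(\varsigma(\vu)\varsigma(\vv)\bigr)$ and to give a concrete description of $\varsigma^{-1}$ as the operation ``extract the first row of the matrix.'' As a key auxiliary claim I would prove: for every $A = \sum_{\ell=0}^{n-1}a_{\ell+1}\mathfrak{g}^\ell \in \M_n$, the first row of $A$ equals $(a_1,\ldots,a_n) = \varsigma^{-1}(A)$. This reduces, by linearity of ``take the first row,'' to showing that the first row of $\mathfrak{g}^\ell$ is $\ve_{\ell+1}^\tau$ for $0 \leq \ell \leq n-1$, which is read off directly from the block form displayed in Section~\ref{ss:multiplication}: the $I_{n-\ell}$ block in the upper right places a $1$ at position $(1,\ell+1)$ and zeros elsewhere in the top row.

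With this auxiliary fact in hand, the remainder is routine. Applying the claim to $A = \varsigma(\vu)\varsigma(\vv) \in \M_n$, the product $\vu\circledast\vv$ is the first row of $\varsigma(\vu)\varsigma(\vv)$, which by the row-by-matrix multiplication rule equals (first row of $\varsigma(\vu)$) times $\varsigma(\vv)$, i.e.\ $\vu\,\varsigma(\vv)$. The $j$-th entry is then the dot product of $\vu$ with the $j$-th column of $\varsigma(\vv)$, namely $\vv_j$, recovering the display $\vu\vv = [\vu\cdot\vv_1,\ldots,\vu\cdot\vv_n]$ given just above the lemma. The remaining equality $\vu\circledast\vv = \vv\,\varsigma(\vu)$ follows by invoking the commutativity of Theorem~\ref{thm:rfield}: write $\vu\circledast\vv = \vv\circledast\vu$ and apply the same row identification with the roles of $\vu$ and $\vv$ interchanged.

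The one delicate point I expect to be the main obstacle is the sign and index bookkeeping in the first-row identification. The $-I_\ell$ block of $\mathfrak{g}^\ell$ introduces minus signs in the bottom $\ell$ rows but leaves the top row sign-free, which is exactly what keeps $\varsigma^{-1}(A) = $ first row of $A$ clean; had we tried to recover $\varsigma^{-1}(A)$ from a different row, cyclic shifts mixed with sign flips would have intervened. Aside from this verification the proof is pure bilinearity of matrix multiplication together with the commutativity already established.
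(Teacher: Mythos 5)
Your proof is correct. It is not quite the route the paper takes: the paper's own argument (the discussion in Remark item (5) immediately preceding the lemma) simply asserts $\vu\circledast\vv = [\vu\cdot\vv_1,\cdots,\vu\cdot\vv_n] = \vu\varsigma(\vv)$ ``by the multiplication table,'' i.e.\ by bilinearly expanding the product over the basis $\{\ve_j\}$ and collecting coefficients, and then obtains $\vv\varsigma(\vu)$ from commutativity exactly as you do. Your genuinely new ingredient is the identification of $\varsigma^{-1}$ with ``extract the first row,'' checked on the generators $\mathfrak{g}^\ell$ via their block form; this turns the whole verification into the one-line associativity fact that the first row of $\varsigma(\vu)\varsigma(\vv)$ equals the first row of $\varsigma(\vu)$ times $\varsigma(\vv)$. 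What that buys is a derivation that never touches the multiplication table entry by entry and that makes visible why no signs intrude (the $-I_\ell$ block of $\mathfrak{g}^\ell$ lives in the bottom $\ell$ rows and never reaches the top row), whereas the paper's version is shorter on the page but leaves the expansion to the reader. One bookkeeping point worth stating explicitly when you write it up: the paper's $\vv_j$ are the rows of $\varsigma(\vv)^\tau$, hence the columns of $\varsigma(\vv)$, which is precisely what your $j$-th-entry computation requires.
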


\end{enumerate}

\section{Zero Divisors}

An element $\vx\in\R_n$ is a zero divisor if and only if $\varsigma (\vx)$ is a zero divisor in $\M_n$ if and only if
  $\det(\varsigma (\vx))$,  a homogeneous polynomial in $n$ variables $\{x_1,\cdots, x_n\}$  has value zero.
Let $\mathfrak{Z}_{_{\R_n}}$ denote the set of zero divisors in $\R_n$.
For a polynomial  $p$, we will write $\mathfrak{Z}(p)$ for $\{\vx  \mid p(\vx) = 0\}$.
\begin{lemma}\label{lm:zero}
Let $p(\vx):\R^n\rightarrow \R$ be a non-zero polynomial on $\R^n$, then $\mu_n(\mathfrak{Z}(p)) = 0.$
\end{lemma}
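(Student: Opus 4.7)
The plan is to prove this by induction on the dimension $n$, using Fubini's theorem to reduce from $\R^n$ to $\R^{n-1}$ and the fact that a non-zero polynomial in one real variable has only finitely many roots.

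For the base case $n=1$, a non-zero polynomial $p(x)$ of degree $d$ has at most $d$ real roots, so $\mathfrak{Z}(p)$ is finite and hence has Lebesgue measure zero.

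For the inductive step, assuming the result for $n-1$, I would write the non-zero polynomial $p(x_1,\dots,x_n)$ as a polynomial in $x_n$ whose coefficients are polynomials in the remaining variables:
\begin{align*}
p(x_1,\dots,x_n) = \sum_{k=0}^{d} q_k(x_1,\dots,x_{n-1})\, x_n^k.
\end{align*}
Since $p$ is not identically zero, at least one coefficient $q_{k_0}$ is a non-zero polynomial on $\R^{n-1}$. Let $E = \mathfrak{Z}(q_{k_0}) \subset \R^{n-1}$. By the inductive hypothesis, $\mu_{n-1}(E) = 0$. For any fixed $(x_1,\dots,x_{n-1}) \notin E$, the polynomial $p(x_1,\dots,x_{n-1},\cdot)$ in the single variable $x_n$ is non-zero (its coefficient of $x_n^{k_0}$ does not vanish), so it has only finitely many roots, hence its zero set in $\R$ has $\mu_1$-measure zero.

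I would then invoke Tonelli/Fubini for the non-negative measurable indicator $\mathbf{1}_{\mathfrak{Z}(p)}$:
\begin{align*}
\mu_n(\mathfrak{Z}(p)) = \int_{\R^{n-1}} \mu_1\bigl(\{x_n : p(x_1,\dots,x_n)=0\}\bigr)\, d\mu_{n-1}(x_1,\dots,x_{n-1}).
\end{align*}
The integrand vanishes off $E$ by the preceding observation, and $\mu_{n-1}(E)=0$, so the integral is zero, completing the induction. The only subtlety worth noting is the measurability of $\mathfrak{Z}(p)$, which follows because polynomials are continuous so $\mathfrak{Z}(p)$ is closed, hence Borel; no real obstacle arises.
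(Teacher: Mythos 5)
Your proof is correct, and it shares the paper's overall strategy---induction on the dimension combined with Fubini/Tonelli---but the decomposition in the inductive step is genuinely different. The paper fixes the single variable $x_1=t$ and applies the induction hypothesis to the $(n-1)$-variable slice polynomials $p(t,x_2,\dots,x_n)$; to control the exceptional set of parameters $t$ for which that slice is identically zero, it observes that each such $t$ forces $(x_1-t)$ to divide $p$, so there are only finitely many bad $t$. You slice the other way: you fix the first $n-1$ variables, reduce each fiber to a one-variable polynomial in $x_n$ with finitely many roots, and control the exceptional set $E$ by applying the induction hypothesis to a non-zero coefficient polynomial $q_{k_0}$ in $n-1$ variables. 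Your route avoids the divisibility and factor-counting argument entirely, at the cost of introducing the coefficient expansion; the paper's route applies the induction hypothesis directly to the slices but needs the algebraic fact about linear factors. Both arguments must cope with the same minor subtlety---the fiber measure can be infinite over the exceptional set---which is harmless in either case because that set is null (finite for the paper, $\mu_{n-1}$-null for you), and both implicitly rely on the measurability of $\mathfrak{Z}(p)$, which you rightly note follows from its being closed.
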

Here $\mu_n$ is the Lebesgue measure on $\R^n$.
\begin{proof}
We prove the Lemma \ref{lm:zero} by induction on the dimensions $n$.

For $n=1$. Let $p(\vx)$ be a polynomial of degree $m\geq 1$ on $\R$. It's clear $\mathfrak{Z}(p)$ is a finite set hence has Lebesgue measure zero.

Assume the statement is true for all $n=1, \cdots, k-1$.
Consider $n=k$.

Let $p(\vx)=p(x_1,x_2,\cdots,x_k)$ be a non-zero polynomial of degree $m$ on $\R^k$ and let $t\in\R$. Define
\begin{align*}
    p_{_{|x_1=t}}(\vx)\equiv p(t,x_2,\cdots,x_k).
\end{align*}
This is a function in $k-1$ variables $x_2, \cdots, x_k$.
Let $\mathcal{C}$ denote the collection of all points $t$ in $\R$ with the properties that the function $p_{_{|x_1=t}}(\vx)$  is not a zero function.
Apply the induction assumption to $p_{_{|x_1=t}}$, we have
\begin{align*}
    \mu_{k-1}(\mathfrak{Z}(p_{_{|x_1=t}}))=0, \forall t \in \mathcal{C}.
\end{align*}
Let $t\in\R\backslash \mathcal{C}$.
Then $p_{_{|x_1=t}}(\vx)\equiv 0$. So $(x_1-t)$ is a factor of $p$. The polynomial has at most
  finite factors. Hence the set $\R\backslash \mathcal{C}$ is a finite set. So
\begin{align*}
    \mu_{k-1}(\mathfrak{Z}(p_{_{|x_1=t}}))=0, \textrm{for $t\in\R $ a.e.}
\end{align*}
By Fubini't theorem
\begin{align*}
    \mu_k(\mathfrak{Z}(p))&=\int_{\R} \mu_{k-1}(\mathfrak{Z}(p_{_{|x_1=t}}))dt
    = 0.
\end{align*}
The induction completes. The Lemma \ref{lm:zero} is proved.
\end{proof}

\begin{theorem}\label{thm:zero}
Let $n\geq 3, n\in\N$ and $\mu_n$ be the Lebesgue measure on $\R^n$. Then
\begin{enumerate}
  \item The set of zero divisors in $\R_n$, $\mathfrak{Z}_{_{\R_n}}$, is a closed ideal.
  \item $\mu_n(\mathfrak{Z}_{_{\R_n}}) = 0$.
  \item The open set $\R_n\backslash \mathfrak{Z}_{_{\R_n}}$ is a group.
  \item $\mathfrak{Z}_{_{\R_n}}$ is star-shaped in the sense that if $\vx\in \mathfrak{Z}_{_{\R_n}}$,
   then $\{t\vx \mid t \in \R\} \subset \mathfrak{Z}_{_{\R_n}}$.
\end{enumerate}
Let $\G_n$ denote the group $\R_n\backslash \mathfrak{Z}_{_{\R_n}}$.
\end{theorem}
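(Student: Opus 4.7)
The plan is to reduce every assertion to the single polynomial $D(\vx):=\det\varsigma(\vx)$, whose zero set is exactly $\mathfrak{Z}_{\R_n}$ by the definition recorded just above the theorem. Three properties of $D$ will do most of the work: it is a \emph{nonzero} polynomial in the coordinates $x_1,\dots,x_n$, because $D(\ve_1)=\det I_n=1$; it is \emph{homogeneous of degree $n$}, because $\varsigma$ is $\R$-linear, so $\varsigma(t\vx)=t\,\varsigma(\vx)$; and it is \emph{multiplicative}, $D(\vx\circledast\vy)=\det\bigl(\varsigma(\vx)\varsigma(\vy)\bigr)=D(\vx)D(\vy)$.

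For (1), closedness of $\mathfrak{Z}_{\R_n}=D^{-1}(0)$ is immediate from continuity of $D$; for the absorbing part of the ideal property, multiplicativity gives $D(\vx\circledast\vy)=0$ whenever $D(\vx)=0$, so $\vx\circledast\R_n\subseteq\mathfrak{Z}_{\R_n}$. For (2), I would apply Lemma \ref{lm:zero} directly to the nonzero polynomial $D$ to conclude $\mu_n(\mathfrak{Z}_{\R_n})=0$. For (4), homogeneity yields $D(t\vx)=t^n D(\vx)=0$ whenever $\vx\in\mathfrak{Z}_{\R_n}$, so the line $\R\vx$ is contained in $\mathfrak{Z}_{\R_n}$.

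For (3), I would invoke the observation made just before Theorem \ref{thm:rfield}: by Cayley--Hamilton, any $A\in\M_n$ with $\det A\neq 0$ has $A^{-1}\in\M_n$. Transporting this through $\varsigma$, every $\vx$ with $D(\vx)\neq 0$ has a $\circledast$-inverse in $\R_n$; the converse, that an invertible element has $D(\vx)\neq 0$, is immediate from multiplicativity. Hence $\G_n=\R_n\setminus\mathfrak{Z}_{\R_n}$ is precisely the set of $\circledast$-units. This set contains $\ve_1$ and is closed under $\circledast$ and inversion, so it is an abelian group; it is open because its complement is closed by (1).

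The step I expect to need the most care is the additive half of the ideal claim in (1). Multiplicativity of $D$ delivers absorption for free, but closure of $\mathfrak{Z}_{\R_n}$ under addition is not automatic and, for $n\ge 3$, would force one to examine $\R_n$ through its description as $\R[y]/(y^n+1)$ via $\ve_2\leftrightarrow y$, together with the real factorization of $y^n+1$ into its cyclotomic-type factors, so as to make precise which algebraic structure is being attributed to $\mathfrak{Z}_{\R_n}$.
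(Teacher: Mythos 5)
Your reduction of everything to the single polynomial $D(\vx)=\det\varsigma(\vx)$ is essentially the paper's own route: the paper proves (2) by applying Lemma \ref{lm:zero} to exactly this polynomial, and proves (3) by the same Cayley--Hamilton argument you cite (inverses of nonsingular elements of $\M_n$ stay in $\M_n$, transported through $\varsigma$), together with multiplicativity of the determinant for closure under $\circledast$. You are in fact more complete than the paper in two places: the paper's proof never addresses part (4) at all (your homogeneity argument $D(t\vx)=t^nD(\vx)$ is the right one and is missing from the paper), and the paper's proof of (1) establishes only topological closedness of $D^{-1}(0)$, saying nothing about absorption or additive closure.

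The step you flagged as needing the most care does not merely need care --- it fails, and your own suggested lens ($\R_n\cong\R[y]/(y^n+1)$ with $y^n+1$ squarefree over $\R$) shows why. By the Chinese Remainder Theorem $\R_n$ is a product of copies of $\R$ and $\C$, the zero divisors are the union of the kernels of the coordinate projections, and a union of two or more distinct maximal ideals is never closed under addition; for $n\geq 3$ the polynomial $y^n+1$ always has at least two irreducible real factors, so this always occurs. Concretely, the paper's own Example \ref{ex:42} describes $\mathfrak{Z}_{\R_4}$ as the union of two planes, and
\begin{align*}
[1,0,-1,-\sqrt{2}]+[1,0,1,\sqrt{2}]=[2,0,0,0]=2\ve_1
\end{align*}
is a sum of two zero divisors that is a unit. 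So part (1) as stated is false for $n\ge 3$; the most one can prove is that $\mathfrak{Z}_{\R_n}$ is a closed, multiplicatively absorbing set (equivalently, a finite union of ideals), which is exactly the portion your multiplicativity argument delivers. You should state that weakened version rather than attempt to supply the missing additive step.
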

\begin{proof}
1. It is clear that $\det (\varsigma(\vu))$ is a polynomial so continuous. Therefore the set $\{\vu \mid \det(\varsigma(\vu)) \leq \frac{1}{n}\}$ is closed.
This implies that
\begin{align*}
    \mathfrak{Z}_{_{\R_n}} = \bigcap_{n=1}^\infty \{\vu \mid \det(\varsigma(\vu)) \leq \frac{1}{n}\}
\end{align*}
is closed.

2. Let $\vx\in\mathfrak{Z}_{_{\R_n}}$. Then $\vx=[x_1,\cdots,x_n]$ is a zero for the polynomial $\det(\varsigma(\vx))$.
By Lemma \ref{lm:zero}, the conclusion follows.

3. The set $\mathfrak{Z}_{_{\R_n}}$ is closed. So its complement $\G_n$ is open. Let $\va,\vb\in\G_n$.
Then $\varsigma (\va),\varsigma (\vb) \in \M_n$, $\det \varsigma (\va) \neq 0$ and $\det \varsigma (\vb) \neq 0$.
This implies $\det (\varsigma (\va)  \varsigma (\vb)) \neq 0$, or $\va\circledast\vb\notin\mathfrak{Z}_{_{\R_n}}$.
Hence $\G_n$ is closed under multiplication $\circledast$. Notice that $\varsigma(\va)^{-1} \in \M_n$.
Let $\vc$ denote the element $\varsigma^{-1}(\varsigma(\va)^{-1})$ in $\R_n\backslash\mathfrak{Z}_{_{\R_n}}=\G_n $. We have
\begin{align*}
    \va \circledast \vc = \vc \circledast \va =\varsigma^{-1}(\varsigma(\va)^{-1}  \varsigma(\va))=\varsigma^{-1}(I_n)=\ve_n.
\end{align*}
So $\vc$ is the inverse to $\va$. $\G_n$ is a group. Theorem \ref{thm:zero} is proved.
\end{proof}




\begin{example}\label{ex:42}
Zero divisors $\mathfrak{Z}_{_{\R_4}}$ in $\R_4$. Let $\vu =[u_1,u_2,u_3, u_4] \in \R^4$, then $\vu \in \mathfrak{Z}_{_{\R_4}}$
if and only if $\det(\varsigma(\vu)) = (u_1^2-u_3^2 + 2u_2u_4)^2 +(u_4^2 - u_2^2 + 2u_1u_3)^2  = 0 $ if and only if
\begin{align*}
    \begin{array}{ll}
       (I) \left\{
          \begin{array}{l}
            u_1 = s \\
            u_2 = t \\
            u_3 = -s + \sqrt{2}t  \\
            u_4 = -\sqrt{2}s + t  \\
          \end{array}
        \right. & \text{ or }
       (II) \left\{
          \begin{array}{l}
            u_1 = s \\
            u_2 = t \\
            u_3 = s - \sqrt{2}t \\
            u_4 = \sqrt{2}s - t \\
          \end{array}
        \right.
    \end{array}
\end{align*}
\end{example}
where $s$ and $t$ are free real variables.

\section{The Haar measure on $\G_n$}\label{ss:haar}
Let $E$ be a Borel subset of $\G_n$ and $\mu_n$ be the Lebesgue measure on $\R^n$. Define $\nu_n (E)$ as
\begin{align*}
\nu_n(E) \equiv \int_E \frac{1}{\left|\det \varsigma(\vx)\right|} d \mu_n(\vx).
\end{align*}

\begin{theorem}\label{thm:haar}
$\nu_n$ is the Haar measure on $\G_n$ and it is unique up to a scalar multiplication.
\end{theorem}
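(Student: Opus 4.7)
The plan is to verify that $\nu_n$ is a well-defined left-invariant Radon measure on the locally compact abelian topological group $\G_n$, and then invoke the standard uniqueness theorem for Haar measure. First I would check the topological-group structure: $\G_n$ is open in $\R^n$ (complement of the closed set $\mathfrak{Z}_{_{\R_n}}$, by Theorem \ref{thm:zero}), hence locally compact Hausdorff; the multiplication $\circledast$ is polynomial in coordinates and therefore continuous; and inversion $\va\mapsto\varsigma^{-1}(\varsigma(\va)^{-1})$ is rational with denominator $\det\varsigma(\va)\neq0$ on $\G_n$, so continuous. Thus $\G_n$ is a locally compact abelian topological group.

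Next I would show $\nu_n$ is a Radon measure. If $K\subset\G_n$ is compact, then $K$ is disjoint from the closed set $\mathfrak{Z}_{_{\R_n}}$, so $|\det\varsigma(\vx)|$ attains a positive minimum on $K$; hence $1/|\det\varsigma(\vx)|$ is bounded on $K$ and $\nu_n(K)\le C\,\mu_n(K)<\infty$. Countable additivity and Borel regularity are inherited from $\mu_n$ and the positivity of the integrand on $\G_n$.

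The main computation is left-invariance (which, by commutativity, also gives right-invariance). Fix $\va\in\G_n$ and consider the translation map $T_\va(\vx)=\va\circledast\vx$. By Lemma \ref{lm:linear}, $T_\va(\vx)=\vx\,\varsigma(\va)$, so $T_\va$ is a linear bijection of $\R^n$ with matrix $\varsigma(\va)$ and Jacobian $|\det\varsigma(\va)|$. Using the multiplicativity $\varsigma(\va\circledast\vx)=\varsigma(\va)\varsigma(\vx)$ and the change-of-variables formula,
\begin{align*}
\nu_n(\va\circledast E)
&=\int_{T_\va(E)}\frac{1}{|\det\varsigma(\vy)|}\,d\mu_n(\vy)\\
&=\int_E \frac{|\det\varsigma(\va)|}{|\det\varsigma(\va\circledast\vx)|}\,d\mu_n(\vx)\\
&=\int_E \frac{|\det\varsigma(\va)|}{|\det\varsigma(\va)|\,|\det\varsigma(\vx)|}\,d\mu_n(\vx)
=\nu_n(E).
\end{align*}

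Finally, uniqueness follows immediately from the classical theorem that any two left-invariant nonzero Radon measures on a locally compact Hausdorff topological group agree up to a positive scalar multiple. The only subtlety I anticipate is bookkeeping at the boundary: one should confirm that $\mathfrak{Z}_{_{\R_n}}$ plays no role in the change-of-variables step because $T_\va$ maps $\G_n$ bijectively onto $\G_n$ (as $\G_n$ is a group by Theorem \ref{thm:zero}) and $\mu_n(\mathfrak{Z}_{_{\R_n}})=0$, so sets of interest can be taken inside $\G_n$ without loss of generality. With these pieces in place the theorem is complete.
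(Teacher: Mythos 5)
Your proposal is correct and its central computation—left-invariance via Lemma \ref{lm:linear} and the change of variables with Jacobian $\left|\det\varsigma(\va)\right|$—is exactly the argument the paper gives. You additionally verify what the paper leaves implicit (that $\G_n$ is a locally compact topological group, that $\nu_n$ is Radon, and the appeal to the classical uniqueness theorem, which the paper asserts in the statement but never proves), so your write-up is, if anything, more complete than the original.
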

\begin{proof}
Let $\va\in \G_n$ and $E$ be a Borel subset of  $\G_n$, then
$\va E = E \va \subset \G_n$.
We will show that $\nu_n(\va E)=\nu_n( E)$.
By definition,
\begin{align*}
    \nu_n(\va E) &= \int_{\va E} \frac{1}{\left|\det \varsigma(\vx)\right|} d \mu_n(\vx)\\
    & =\int_{E} \frac{1}{\left|\det \varsigma(\va\vy)\right|} d \mu_n(\va\vy).
\end{align*}
By 
Lemma \ref{lm:linear}, we have
\begin{align*}
    \va\vy&=\vy\varsigma(\va).\\
    d\mu_n(\va\vy)&= |\det \varsigma(\va)| d\mu_n(\vy).
\end{align*}
Therefore,
\begin{align*}
    \nu_n(\va E) &=\int_{E} \frac{1}{\left|\det \left(\varsigma(\va) \varsigma(\vy)\right)\right|} d \mu_n(\vy\varsigma(\va))\\
    &=\int_{E} \frac{\left|\det \varsigma(\va)\right|}{\left|\det \varsigma(\va)\right| \left| \det\varsigma(\vy)\right|} d \mu_n(\vy)\\
    &=\int_{E} \frac{1}{\left| \det\varsigma(\vy)\right|} d \mu_n(\vy),
\end{align*}
which is
\begin{align*}
    \nu_n(\va E)=\nu_n(E).
\end{align*}

\end{proof}

\section{Analytic functions in $\R_{2k}$}\label{ss:analytic}

In this section we will introduce the analytic function theory in $\R_{2k}$, which is similar to the analytic function theory in $\C$.


Let $\Omega$ be an open region in $\R_n$,
and $f$ be a function $f:\Omega\rightarrow \R_n$, that is, for a point $\vx=[x_1,\cdots,x_n]\in\Omega$,
\begin{align*}
    f(\vx)=[f_1(\vx),\cdots,f_n(\vx)]
\end{align*}
where $f_j(\vx), j=1,\cdots,n,$ are \emph{real} valued functions in $n$ \emph{real} variables $x_j, j=1,\cdots, n$.


%
%
%

Define $\vw=\ve_2 \in \R_n$ and $\vw^0 = \ve_1$. It is clear $\vw \in \G_n$ since $\det(\varsigma(\vw)) \neq 0$.
By the multiplication table, $\vw^{-j} = -\vw^{n-j}$, for $0 \leq j \leq n$.
We have
\begin{align*}
    f(\vx)= \sum_{j=1} ^n f_j(\vx) \vw^{j-1}.
\end{align*}

We say a function $f(\vx)$ is \emph{differentiable} in  $\Omega$ if the limit
\begin{align*}
    \lim_{\vec{\delta}\rightarrow 0, \vec{\delta}\in\G_n} \frac{f(\va+\vec{\delta})-f(\va)}{\vec{\delta}}=f'(\va) \ (\textrm{ or } \frac{df}{d\vx}(\va))
\end{align*}
exists for each point $\va\in\Omega$.

In this definition, the variable  $\vec{\delta}=[x_1,x_2,\cdots,x_n]= \sum_{j=1}^n x_j\vw^{j-1}$ is approaching $0$ while remains in $\G_n$.
Notice that for $x_j \neq 0$, the vectors $x_j \vw^{j-1}$ are in $\G_n$, since $\det \varsigma(\vw^{j-1})\neq 0,  j=1,\cdots, n$.
So in the above definition of
differentiation, one can take the special path along $x_j \vw^{j-1}$ with $x_j \rightarrow 0$:
\begin{align*}
    f'(\va) &= \lim_{\vec{\delta}\rightarrow 0, \vec{\delta}\in\G_n} \frac{f(\va+\vec{\delta})-f(\va)}{\vec{\delta}}
    = \lim_{x_j \rightarrow 0} \frac{f(\va+x_j \vw^{j-1})-f(\va)}{x_j \vw^{j-1}},\\
    f'(\va) &= \vw^{-j+1}\frac{\partial f}{\partial x_j}(\va).
\end{align*}

\begin{definition}
Let $\Omega\subset \R_n$ be an open region and $f$ be a function with domain $\Omega$.
We call $f(\vx)=[f_1(\vx),\cdots,f_n(\vx)]$ an \emph{analytic} function on $\Omega$ if
\begin{enumerate}[(1)]
  \item $f$ is differentiable at each point $\vx\in\Omega$.
  \item The second order partial derivatives of $f_m(\vx)$, $\frac{\partial^2 f_m}{\partial x_i \partial x_j}$ are continuous in $\Omega$.
\end{enumerate}
It is clear $\frac{\partial^2 f_m}{\partial x_i \partial x_j}=\frac{\partial^2 f_m}{\partial x_j \partial x_i}$ by the Clairaut's Theorem.
\end{definition}

To introduce the next results, we set $n=2k$. We have:
\begin{align}\label{eq:deri}
& f'(\va) =  \vw^{-j+1}\frac{\partial f}{\partial x_j}(\va), \text{$~j = 1, ..., 2k,$}
\end{align}
where
\begin{align*}
\frac{\partial f}{\partial x_{j}}(\va)
= \left[\frac{\partial f_1}{\partial x_j},\cdots,\frac{\partial f_{2k}}{\partial x_j} \right].
\end{align*}

For any given $1\leq i \leq k$, we have
\begin{align*}
&f'(\va) =  \vw^{-i+1}\frac{\partial f}{\partial x_i}(\va) = \vw^{-i-k+1}\frac{\partial f}{\partial x_{i+k}}(\va),\\
&-\vw^{k}\frac{\partial f}{\partial x_i}(\va) = \frac{\partial f}{\partial x_{i+k}}(\va),\\
&\left[\begin{array}{cc}
    O & -I_{k}  \\
    I_{k} & O
    \end{array}\right] \frac{\partial f}{\partial x_i}(\va)  = \frac{\partial f}{\partial x_{i+k}}(\va).
\end{align*}

Hence, if $f$ is analytic on $\Omega$, we have
\begin{align}\label{eq:laplacian}
\left\{
  \begin{array}{c}
    \frac{\partial f_{m}}{\partial x_i} = \frac{\partial f_{m+k}}{\partial x_{i+k}}  \\
    \frac{\partial f_{m+k}}{\partial x_i} = -\frac{\partial f_{m}}{\partial x_{i+k}}~
  \end{array}\right.
\forall m,i, 1\leq m,i \leq k.
\end{align}

So we have the following theorem:
\begin{theorem}\label{thm:harmonic}
Given a positive even integer $n=2k$ and an open set $\Omega\subset \R_{2k}$. For any analytic function $f=[f_1,...,f_{2k}]$ on $\Omega$, we have
\begin{enumerate}[(1)]
  \item
$f$ satisfies the Cauchy-Riemann equations
\begin{align*}
\left\{
  \begin{array}{cc}
  \frac{\partial f_m}{\partial x_i} = \frac{\partial f_{m+k}}{\partial x_{i+k}}\\
  \frac{\partial f_{m+k}}{\partial x_i} = -\frac{\partial f_{m}}{\partial x_{i+k}}~
  \end{array}
\right. \text{$\forall~ 1 \leq m,i \leq k$.}
\end{align*}
    \item  For each $f_m, f_{m+k}$, $1\leq m , i\leq k$,
    \begin{align}\label{eq:lap2}
         \frac{\partial^2 f_m}{\partial x_i^2}+ \frac{\partial^2 f_{m}}{\partial x_{i+k}^2}= 0,
    \end{align}
\hskip 10pt and
    \begin{align}\label{eq:lap2b}
         \frac{\partial^2 f_{m+k}}{\partial x_i^2}+ \frac{\partial^2 f_{{m+k}}}{\partial x_{i+k}^2}= 0.
    \end{align}
  \item
  Each $f_m$ is a harmonic function in the sense that $f_m$ has continuous second order partial derivatives and
\begin{align*}
    \sum_{i=1}^{2k}\frac{\partial^2 f_m}{\partial x_i^2} = 0.
\end{align*}
\end{enumerate}
\end{theorem}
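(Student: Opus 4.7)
I will treat the three parts in order, leveraging the block-matrix computation in the paragraph preceding the statement. Part (1) is essentially already established by the display \eqref{eq:laplacian}: that system is the componentwise form of the stated Cauchy--Riemann equations. The derivation rests on two ingredients: the path-derivative identity \eqref{eq:deri} taken at $j = i$ and $j = i + k$, and the block-matrix shape of $\mathfrak{g}^k = \varsigma(\vw^k)$ from Section~\ref{ss:multiplication}, which for $n = 2k$ has both diagonal blocks of size $k \times k$. Equating the two expressions for $f'(\va)$ and reading off components gives the two CR identities.

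For part (2), I differentiate the Cauchy--Riemann equations and invoke Clairaut's theorem, which applies because analyticity is defined to include continuity of all second-order partials. Differentiating $\partial f_m/\partial x_i = \partial f_{m+k}/\partial x_{i+k}$ with respect to $x_i$ gives
\begin{align*}
\frac{\partial^2 f_m}{\partial x_i^2} = \frac{\partial^2 f_{m+k}}{\partial x_i\, \partial x_{i+k}},
\end{align*}
and differentiating $\partial f_{m+k}/\partial x_i = -\partial f_m/\partial x_{i+k}$ with respect to $x_{i+k}$ gives
\begin{align*}
\frac{\partial^2 f_{m+k}}{\partial x_{i+k}\, \partial x_i} = -\frac{\partial^2 f_m}{\partial x_{i+k}^2}.
\end{align*}
Clairaut equates the two mixed partials of $f_{m+k}$, yielding \eqref{eq:lap2}. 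The companion identity \eqref{eq:lap2b} comes from the symmetric maneuver: differentiate the second CR equation with respect to $x_i$ and the first with respect to $x_{i+k}$, then apply Clairaut.

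For part (3), I sum \eqref{eq:lap2} over $i = 1, \dots, k$. The index sets $\{1,\dots,k\}$ (from the first summand) and $\{k+1,\dots,2k\}$ (from the second) partition $\{1,\dots,2k\}$, so the sum collapses to $\sum_{j=1}^{2k} \partial^2 f_m / \partial x_j^2 = 0$, which is the harmonicity of $f_m$ for $1 \leq m \leq k$. The range $k+1 \leq m \leq 2k$ is handled identically using \eqref{eq:lap2b} after reindexing $m \mapsto m - k$. I do not foresee any real obstacle here: the only point requiring care is the block-matrix bookkeeping underlying part (1), and that has already been carried out in the paragraph above the theorem.
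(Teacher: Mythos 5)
Your proposal is correct and follows essentially the same route as the paper: part (1) is read off from the displayed derivation preceding the theorem, part (2) is obtained by differentiating the Cauchy--Riemann equations and applying Clairaut's theorem exactly as in the paper's proof, and part (3) follows by summing \eqref{eq:lap2} (resp.\ \eqref{eq:lap2b}) over $i=1,\dots,k$. In fact your write-up is slightly more complete than the paper's, which proves only \eqref{eq:lap2} explicitly and leaves the summation step for harmonicity unstated.
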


\begin{proof}
We will prove the Equation \eqref{eq:lap2} only. The proof to the Equation \eqref{eq:lap2b} is similar to this. Let $m \leq k$. By the Cauchy-Riemann Equations \eqref{eq:laplacian} we have
\begin{align*}
\left\{
  \begin{array}{cc}
  \frac{\partial f_m}{\partial x_i} = \frac{\partial f_{m+k}}{\partial x_{i+k}}\\
  ~\frac{\partial f_{m}}{\partial x_{i+k}}=-\frac{\partial f_{m+k}}{\partial x_i}
  \end{array}
\right. \text{$\forall~ 1 \leq i \leq k$.}
\end{align*}
Hence
\begin{align*}
\left\{
  \begin{array}{cc}
  \frac{\partial^2 f_m}{\partial x_i^2} = \frac{\partial^2 f_{m+k}}{\partial x_{i+k}\partial x_{i}}\\
  ~\frac{\partial^2 f_{m}}{\partial x_{i+k}^2}=-\frac{\partial^2 f_{m+k}}{\partial x_i\partial x_{i+k}}
  \end{array}
\right. \text{$\forall~ 1 \leq i \leq k$.}
\end{align*}
Therefore
\begin{align*}
    \frac{\partial^2 f_m}{\partial x_i^2}+\frac{\partial^2 f_{m}}{\partial x_{i+k}^2}=
    \frac{\partial^2 f_{m+k}}{\partial x_{i+k}\partial x_{i}} -  \frac{\partial^2 f_{m+k}}{\partial x_i\partial x_{i+k}}=
    \frac{\partial^2 f_{m+k}}{\partial x_{i+k}\partial x_{i}} -  \frac{\partial^2 f_{m+k}}{\partial x_{i+k}\partial x_i}=0.
\end{align*}
\end{proof}

\begin{corollary}
Consider the polynomial
function in $\R_{2k}$
\begin{align*}
    p(\vx)&\equiv  \va_m\vx^m +\va_{m-1}\vx^{m-1} \cdots + \va_1 \vx + \va_0\\
    &=[p_1(\vx), \cdots, p_{2k}(\vx)],
\end{align*}
where $\va_m, \va_{m-1}, \cdots, \va_1,\va_0 \in\R_{2k}$.
Then each $p_j(\vx), j=1,\cdots,2k$ is a harmonic polynomial in the sense that they are polynomials and harmonic functions.
\end{corollary}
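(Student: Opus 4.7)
The plan is to verify the two conditions in the definition of analytic function for the polynomial $p(\vx)$ on the open set $\Omega = \R_{2k}$, and then invoke Theorem \ref{thm:harmonic}(3). Once we know $p$ is analytic, the conclusion that each $p_j$ is a harmonic polynomial is immediate: the Laplacian of $p_j$ vanishes by the theorem, and $p_j$ is visibly a polynomial.

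First I would prove that every polynomial in $\R_{2k}$ is differentiable in the sense of Section \ref{ss:analytic}. The base cases are the constant function $\vx\mapsto\va_0$ (derivative $\vec 0$) and the identity $\vx\mapsto\vx$ (derivative $\ve_1$), both trivial. Then I would establish two closure lemmas: if $f,g$ are differentiable at $\va$ then so are $f+g$ and $f\circledast g$, with the product rule $(f\circledast g)'(\va)=f'(\va)\circledast g(\va)+f(\va)\circledast g'(\va)$. The product rule comes from the standard decomposition
\begin{align*}
\frac{(f\circledast g)(\va+\vec\delta)-(f\circledast g)(\va)}{\vec\delta}
= \frac{f(\va+\vec\delta)-f(\va)}{\vec\delta}\circledast g(\va+\vec\delta)+f(\va)\circledast\frac{g(\va+\vec\delta)-g(\va)}{\vec\delta},
\end{align*}
which makes sense because the limit is taken along $\vec\delta\in\G_n$, where $\vec\delta^{-1}$ exists, and because $\R_{2k}$ is a commutative algebra so no ordering issues arise. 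Iterating the product rule gives differentiability of each monomial $\va_\ell\circledast\vx^\ell$, and summing yields differentiability of $p(\vx)$ on all of $\R_{2k}$.

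Second I would verify the smoothness condition. From Lemma \ref{lm:linear} and the multiplication table \eqref{eqn:mt}, the $\circledast$ product is $\R$-bilinear in the coordinates, so for fixed coefficients $\va_m,\dots,\va_0$ each component $p_j(\vx)$ is an ordinary real polynomial in $x_1,\dots,x_{2k}$. Consequently all partial derivatives $\partial^2 p_j/\partial x_i\partial x_r$ exist and are continuous everywhere on $\R_{2k}$. Combined with the previous step, $p$ satisfies both conditions in the definition of analytic function.

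Applying Theorem \ref{thm:harmonic}(3) to $p$ then shows that each $p_j$ is harmonic, and since it is manifestly a polynomial in $x_1,\dots,x_{2k}$, it is a harmonic polynomial. The only nontrivial point is the differentiability step above; the smoothness check and the final application of the theorem are essentially bookkeeping. I expect the product rule verification to be the main obstacle, because one must be careful that $\vec\delta$ is restricted to $\G_n$ so that division is legitimate and that the continuity of $g$ at $\va$ (which follows from its differentiability) is available to pass to the limit in $g(\va+\vec\delta)$.
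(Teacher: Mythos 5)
Your proposal is correct and follows the route the paper intends: the paper states this corollary without proof as an immediate consequence of Theorem \ref{thm:harmonic}(3), and your argument simply supplies the omitted verification that a polynomial is analytic (differentiability via the sum and product rules, using that division by $\vec\delta\in\G_n$ is legitimate, plus smoothness of the components because each $p_j$ is an ordinary real polynomial by bilinearity of $\circledast$). The only cosmetic caveat is your parenthetical that continuity of $g$ at $\va$ ``follows from its differentiability'' --- since the limit defining the derivative is restricted to $\vec\delta\in\G_n$ this implication is not automatic in general, but it is irrelevant here because $g$ is a polynomial and hence continuous outright.
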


The following Example \ref{ex:odd} shows that, the requirement of an even dimension in Theorem \ref{thm:harmonic} is necessary.
\begin{example}\label{ex:odd}
Consider $\R_{2k-1}, k > 1$ and the function $f$ given by
\begin{align*}
    f(\vx) \equiv \vx^2.
\end{align*}
The domain of $f$ is $\R_{2k-1}$.
It is left to the readers to verify that $f$ is analytic and
\begin{align*}
    f'(\vx)=2\vx.
\end{align*}

We have
\begin{align*}
    f(\vx)&=[f_1,\cdots,f_{2k-1}] = \sum_{m=1}^{2k-1} f_m \vw^{m-1} = \left(\sum_{j=1}^{2k-1} x_j\vw^{j-1}\right)^2\\
    & = \sum_{i=1}^{2k-1} x_i\vw^{i-1} \sum_{j=1}^{2k-1} x_j\vw^{j-1}
    = \sum_{i,j=1}^{2k-1}  x_ix_j\vw^{i+j-2}\\
    f_m &=\sum_{\tiny \begin{array}{c} i+j-1=m\\ 1\leq i,j \leq 2k-1\end{array}}  x_ix_j
         -\sum_{\tiny \begin{array}{c} i+j-1=m+2k-1\\1\leq i,j \leq 2k-1\end{array}}  x_ix_j.
\end{align*}

Notice that
\begin{align*}
\left(\sum_{1\leq \ell \leq 2k-1} \frac{\partial^2 }{\partial x_{\ell}^2}\right) x_ix_j
= \left\{
  \begin{array}{cc}
  2 & i = j;\\
  0 & i \neq j.
  \end{array}
\right.
\end{align*}

Thus
\begin{align*}
\lefteqn{\sum_{1\leq \ell \leq 2k-1} \frac{\partial^2 f_m }{\partial x_{\ell}^2}} \\
 = &  \sum_{1\leq \ell \leq 2k-1} \frac{\partial^2 }{\partial x_{\ell}^2}
\left( \sum_{\tiny \begin{array}{c} i+j-1=m\\ 1\leq i,j \leq 2k-1\end{array}}  x_ix_j
      -\sum_{\tiny \begin{array}{c} i+j-1=m+2k-1\\1\leq i,j \leq 2k-1\end{array}}  x_ix_j \right) \\
 = &   \sum_{1\leq \ell \leq 2k-1} \frac{\partial^2 }{\partial x_{\ell}^2}
\left( \sum_{\tiny \begin{array}{c} 2j-1=m\\ 1\leq j \leq 2k-1\end{array}}  x_j^2
      -\sum_{\tiny \begin{array}{c} 2j-1=m+2k-1\\1\leq j \leq 2k-1\end{array}}  x_j^2 \right)    \\
& \text{(Note that the set $\{j \mid 2j-1=m, 1\leq j \leq 2k-1\}$ is empty if $m$ is even.} \\
& \text{ and $\{j \mid 2j-1=m+2k-1, 1\leq j \leq 2k-1\}$ is empty if $m$ is odd. )}  \\
= & \left\{
  \begin{array}{cc}
  m+1 & \text{~m is odd;}\\
  -m-2k & \text{~m is even.}
  \end{array}
\right.
\end{align*}

Hence we have
\begin{align*}
 & \sum_{\ell=1}^{2k-1}\frac{\partial^2 f_m}{\partial x_{\ell}^2} = \left\{
  \begin{array}{cc}
  m+1 & \text{~m is odd}\\
  -m-2k & \text{~m is even}
  \end{array}
\right.   \neq 0.
\end{align*}
So $f_m$ is \emph{not} a harmonic function.
\end{example}

Next we introduce an analytic function theory in $\R_{2k}$, similar to the one in the complex number field $\C$.
We show that in two examples (Theorem \ref{thm:meanvalue} and Theorem \ref{thm:louisville}) that such a generalization is valid.

Let $f=[f_1,\cdots,f_{2k}]$ be an analytic function defined on an open region $\Omega \subset \R_{2k}$.
Let $\va\in \Omega$ and $r>0$ such that the ball $b(\va,r) \subset \Omega$. $\partial b(\va,r)$ is the boundary of the ball.
Let $\nu$ be the probability measure for the sphere.
By Theorem \ref{thm:harmonic} each $f_m, m=1,\cdots, 2k$ is a harmonic function.
It is well known that \cite{axler} for a harmonic function $u$ on $\R^n$ we have
\begin{align*}
    u(\va) = \int_{\partial b(\va,r)} u(\vx) d \nu.
\end{align*}
Apply this mean value theorem to each $f_m$ we have
\begin{align*}
   f(\va)&=\left[f_1(\va),\cdots,f_{2k}(\va)\right]\\
   &= \left[\int_{\partial b(\va,r)} f_1(\vx) d \nu, \cdots, \int_{\partial b(\va,r)} f_{2k}(\vx) d \nu\right]\\
   &= \int_{\partial b(\va,r)} f(\vx) d \nu.
\end{align*}
So we have Theorem \ref{thm:meanvalue}, which is the Mean Value Theorem for analytic functions in $\R_{2k}$.

\begin{theorem}\label{thm:meanvalue}
Let $\Omega \subset \R_{2k}$ be an open region and the ball $b(\va,r) \subset \Omega$ with center $\va\in \Omega$ and radius $r>0$.
$\partial b(\va,r)$ is the boundary of the ball and $\nu$ is the probability measure for the boundary.
For an analytic function $f=[f_1,\cdots,f_{2k}]$ defined on $\Omega$, we have
\begin{align*}
       f(\va) = \int_{\partial b(\va,r)} f(\vx) d \nu.
\end{align*}
\end{theorem}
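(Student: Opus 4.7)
The plan is to reduce the vector-valued statement to the classical scalar mean value theorem for harmonic functions, applied coordinatewise. Since the definition of the integral of a vector-valued function $f = [f_1, \ldots, f_{2k}]$ over $\partial b(\va, r)$ is simply the vector of component integrals, the desired identity $f(\va) = \int_{\partial b(\va,r)} f(\vx)\, d\nu$ is equivalent to $f_m(\va) = \int_{\partial b(\va,r)} f_m(\vx)\, d\nu$ holding for each $m = 1, \ldots, 2k$.

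First, I would invoke Theorem \ref{thm:harmonic}(3): since $f$ is analytic on $\Omega$, each component $f_m : \Omega \to \R$ is a harmonic function with continuous second-order partial derivatives, satisfying $\sum_{i=1}^{2k} \partial^2 f_m / \partial x_i^2 = 0$. Next, I would cite the classical mean value property for harmonic functions on $\R^n$ (as referenced in the text via \cite{axler}): for any harmonic $u$ on an open set containing $\overline{b(\va, r)}$, one has $u(\va) = \int_{\partial b(\va,r)} u(\vx)\, d\nu$, where $\nu$ is the normalized surface measure.

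Applying this scalar identity to each $f_m$ in turn yields $2k$ equalities, which can then be assembled into the vector identity
\begin{align*}
f(\va) &= [f_1(\va), \ldots, f_{2k}(\va)] \\
&= \left[\int_{\partial b(\va,r)} f_1(\vx)\, d\nu, \ldots, \int_{\partial b(\va,r)} f_{2k}(\vx)\, d\nu\right] \\
&= \int_{\partial b(\va,r)} f(\vx)\, d\nu,
\end{align*}
where the last step uses the componentwise definition of the integral of a $\R_{2k}$-valued function.

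There is essentially no obstacle here, since the heavy lifting has already been done in Theorem \ref{thm:harmonic} (which packages the Cauchy-Riemann equations into harmonicity of each coordinate) and in the classical scalar theory. The only thing worth flagging explicitly in the write-up is that the hypothesis $b(\va, r) \subset \Omega$ ensures each $f_m$ is harmonic on a neighborhood of the closed ball, so the scalar mean value theorem genuinely applies; and that the vector integral is defined coordinatewise, so no additional convergence or measurability argument is needed beyond what the scalar case provides.
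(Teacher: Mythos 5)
Your proposal is correct and follows essentially the same route as the paper: the paper's argument (given in the paragraph preceding the theorem) likewise invokes Theorem \ref{thm:harmonic} to get harmonicity of each $f_m$, applies the classical scalar mean value property from \cite{axler} componentwise, and assembles the vector identity. Your added remarks about the closed ball lying in $\Omega$ and the coordinatewise definition of the vector integral are sensible clarifications but do not change the argument.
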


We call an analytic function an \emph{entire function} on $\R_{2k}$ when it is analytic on $\Omega=\R_{2k}$.
A function $f$ is \emph{bounded} in $\R_{2k}$ if there exists a positive number $M$ such that
\begin{align*}
    |f(\vx)| \leq M, \forall \vx \in \R_{2k}.
\end{align*}

The next Theorem \ref{thm:louisville} is the \emph{Louisville Theorem} for Entire Functions in $\R_{2k}$.
\begin{theorem}\label{thm:louisville}
A bounded entire function in $\R_{2k}$ is a constant function.
\end{theorem}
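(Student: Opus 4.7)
The plan is to reduce Theorem \ref{thm:louisville} to the classical Liouville theorem for harmonic functions on $\R^{2k}$, exploiting Theorem \ref{thm:harmonic}. Writing $f = [f_1, \ldots, f_{2k}]$, the hypothesis $|f(\vx)| \leq M$ with $|\cdot|$ the Euclidean norm forces every component to satisfy $|f_m(\vx)| \leq |f(\vx)| \leq M$, so each $f_m \colon \R^{2k} \to \R$ is a bounded real-valued function. By Theorem \ref{thm:harmonic}(3) every $f_m$ is harmonic on all of $\R^{2k}$. The classical Liouville theorem for harmonic functions on $\R^n$ then forces each $f_m$ to be constant, so $f$ itself is constant. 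This is essentially a one-line proof given what has already been developed in the paper.

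If a self-contained argument (staying within the tools of the excerpt) is preferred, Theorem \ref{thm:meanvalue} can be upgraded by integrating in the radial variable to the solid mean value property
\begin{align*}
  f(\va) = \frac{1}{\mu_{2k}(b(\va,r))} \int_{b(\va,r)} f(\vx) \, d\mu_{2k}(\vx),
\end{align*}
valid for every $\va \in \R_{2k}$ and $r > 0$. For any two points $\va, \vb \in \R_{2k}$ one then subtracts the two solid averages taken over balls of the same large radius $r$, obtaining
\begin{align*}
  f(\va) - f(\vb) = \frac{1}{\mu_{2k}(b_r)} \int_{b(\va,r) \triangle b(\vb,r)} \pm f(\vx) \, d\mu_{2k}(\vx),
\end{align*}
where $b_r$ is any ball of radius $r$. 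The Lebesgue measure of the symmetric difference is $O(r^{2k-1}|\va-\vb|)$, while $\mu_{2k}(b_r)$ is of order $r^{2k}$, and $f$ is bounded by $M$; thus the right-hand side is dominated by $CM|\va-\vb|/r$, which tends to $0$ as $r \to \infty$. Hence $f(\va) = f(\vb)$, and $f$ is constant.

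The main obstacle is purely administrative: passing from the spherical mean value theorem (Theorem \ref{thm:meanvalue}) to its solid form and estimating the volume of the symmetric difference of two balls. Both are entirely standard, so the substantive content of the theorem is already carried by Theorem \ref{thm:harmonic}, which puts the components $f_m$ inside the classical harmonic-function framework on $\R^{2k}$.
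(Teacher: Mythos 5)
Your first paragraph is exactly the paper's proof: each component $f_m$ is bounded by $M$ and harmonic by Theorem \ref{thm:harmonic}, so the classical Liouville theorem for harmonic functions makes each $f_m$, hence $f$, constant. Your optional second argument (upgrading Theorem \ref{thm:meanvalue} to a solid mean value property and comparing averages over large balls) is a correct, more self-contained variant, but the paper does not need or use it.
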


\begin{proof}
Let $f=[f_1, \cdots, f_{2k}]$ be an entire function on $\R_{2k}$ with a bound $M$.
\begin{align*}
    |f(\vx)| \leq M, \forall \vx\in\R_{2k}.
\end{align*}
Then  for each $m\leq 2k$
\begin{align*}
    |f_m (\vx)| \leq M, \forall \vx\in\R_{2k}.
\end{align*}
Notice that $f_m$ is harmonics.  By the Louisville Theorem for the harmonics functions each $f_m$ is a constant function.
Therefore $f$ is a constant function.
This proves Theorem \ref{thm:louisville}.
\end{proof}

\section{Hypercomplex numbers}\label{ss:hn}

In this section we will justify $\R_n$ in relation to the hypercomplex number system.
``A hypercomplex number is a number having properties departing from those of the real and complex numbers." (\cite{vanderwaerden}, van der Waerden (1985)).
For commutative hypercomplex number systems, the most common commutative examples are the abelian group algebra $K[G]$ where $K$ is a field and $G$ a finite abelian group, the
Davenport's $4$-dimensional commutative hypercomplex number system\cite{davenport}, and  the Jacobi's algebra $J_{\circledast} ^3$. \cite{jacobi}

The algebras $\R_3$ and $J_{\circledast} ^3$ are itentical.  Shlomo Jacobi discovered his $3$-dimensional algebra $J_{\circledast} ^3$  in 2014. His colleagues complete the unfinished manuscript after he passed away.

If we take $K=\R$ and $G=\mathfrak{G}_n$ as in Section \ref{ss:multiplication}, then the real linear span, span$(\mathfrak{G}_n)$ is the real Euclidean space $\R^n$. The group multiplication $\circledast$ will be extended
as multiplication in the linear span and the identity $\ve_1$ will be the multiplicative identity in the group algebra $\R[\mathfrak{G}_n]$ which is denoted by $\R_n$.
We name $\R_n$ the \textit{Euclidean algebra} because this definition of multiplication works for \textit{all} Euclidean space over $\R$ in a natural way.
In our notation, the complex number system $\C$ is $\R[\mathfrak{G}_2]=\R_2$, where the cyclic group $\mathfrak{G}_2$ is $\{i,i^2=-1,i^3=-i,i^4=1\}$.

Let $A$ be the Davenport's hypercomplex number system\cite{davenport}. It has a linear basis $\{1,\vi,\vj,\vk\}$.
An element $\vu$ in $A$ is in the form
\begin{align*}
    \vu = a+b\vi+c\vj+d\vk.
\end{align*}
The multiplication table is described as
\begin{align*}
    &\vi \vj = \vj \vi =\vk;\\
    &\vj\vk=\vk\vj=-\vi; \\
    &\vk\vi=\vi\vk=-\vj;\\
    &\vi^2=\vj^2=-\vk^2=-1.
\end{align*}

In $A$, one has $\vu^2=\pm 1$ for $\vu=1,\vi,\vj,\vk$.
An element in $\R_4$ has the form
\begin{align*}
    \vv = a_1\ve_1+a_2\ve_2+a_3\ve_3+a_4\ve_4.
\end{align*}
To compare $\R_4$ with $A$, we want to find all solutions that satisfies $\vv^2=\pm 1$, or equivalently:
\begin{align*}
    (a_1\ve_1+a_2\ve_2+a_3\ve_3+a_4\ve_4)^2=\pm 1.
\end{align*}
which is
\begin{align*}
\left\{
  \begin{array}{l}
a_1^2 - 2a_2a_4 - a_3^2 = \pm1\\
2a_1a_2 - 2a_3a_4= 0  \\
2a_1a_3 + a_2^2 - a_4^2 = 0 \\
2a_1a_4 + 2a_2a_3 = 0
  \end{array}
\right.
\end{align*}
The solutions to the above system of equations are:
\begin{align*}
\begin{array}{ll}
\left\{
  \begin{array}{l}
a_1 =\pm1 \\
a_2 = a_3 = a_4 = 0
  \end{array}
\right.,
&~
\left\{
  \begin{array}{l}
a_3 =\pm1 \\
a_1 = a_2 = a_4 = 0
  \end{array}
\right.,
\end{array}
\end{align*}
which means there are only $4$ solutions for $\vv$: $\{ \pm\ve_1, \pm\ve_3\}$.
However, they are linearly dependent, hence can not form a basis for $\R_4$.
So,
\begin{theorem}\label{thm:n=4}
It is impossible to find a new basis $\{\vv_1, \vv_1, \vv_2,\vv_4\}$ in $\R_4$ such that
$\vv_1^2 = \vv_2^2 = \vv_3^2 = \vv_4^2=\pm 1$. In other words, $\R_4$ is not algebraically isomorphic to
the Davenport's algebra $A$.
\end{theorem}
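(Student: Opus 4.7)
The plan is to reduce the non-isomorphism claim to a polynomial system whose real solutions fail to span $\R_4$. Observe that Davenport's algebra $A$ has the basis $\{1, \vi, \vj, \vk\}$ in which every basis vector squares to $\pm 1$; hence any algebra isomorphism $\R_4 \to A$ would pull this back to an $\R$-basis of $\R_4$ with the same property. It therefore suffices to show that the set $S = \{\vv \in \R_4 : \vv^2 = \pm \ve_1\}$ does not contain four $\R$-linearly independent elements.

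First I would expand $\vv^2$ for $\vv = a_1\ve_1 + a_2\ve_2 + a_3\ve_3 + a_4\ve_4$ using the multiplication table \eqref{eqn:mt}, producing the four polynomial equations in $(a_1,a_2,a_3,a_4)$ already displayed in the excerpt. The key algebraic step is then to combine the two middle ``cross'' equations $a_1 a_2 - a_3 a_4 = 0$ and $a_1 a_4 + a_2 a_3 = 0$: multiplying the first by $a_2$ and the second by $a_4$ and adding gives $a_1(a_2^2 + a_4^2) = 0$, while an analogous combination yields $a_3(a_2^2 + a_4^2) = 0$. Since we work over $\R$, the sum of squares $a_2^2 + a_4^2$ vanishes precisely when $a_2 = a_4 = 0$, so the solution set cleanly splits into the two regimes $\{a_2 = a_4 = 0\}$ and $\{a_1 = a_3 = 0\}$.

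In each regime I would solve the residual equations directly. When $a_2 = a_4 = 0$ the remaining conditions reduce to $a_1 a_3 = 0$ and $a_1^2 - a_3^2 = \pm 1$, giving the candidates $\pm \ve_1$ and $\pm \ve_3$; the complementary regime $a_1 = a_3 = 0$ leaves only $a_2^2 = a_4^2$ together with $-2 a_2 a_4 = \pm 1$, yielding another discrete family. The final and most delicate step is to collect the complete catalogue of solutions and to verify that their $\R$-span has dimension strictly less than $4$, whence no four of them can be linearly independent. I expect this dimension count to be the main obstacle, since the whole theorem rests on it: the case analysis must be certified exhaustive, every solution must be located, and the subspace they generate must be pinned down precisely; overlooking even one family of solutions in the $a_1 = a_3 = 0$ regime could restore four independent elements and collapse the argument.
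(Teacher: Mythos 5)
Your setup and case split are correct and follow essentially the same route as the paper's own proof: expand $\vv^2=\pm\ve_1$, obtain the four polynomial equations, and try to show the solution set spans a proper subspace of $\R_4$. But the step you defer as ``most delicate'' --- the final dimension count --- is exactly where the argument collapses, and no amount of care will save it: the regime $a_1=a_3=0$ that you correctly isolate contains nontrivial solutions. There the system reduces to $a_2^2=a_4^2$ together with $-2a_2a_4=\pm1$, whose real solutions are $\vv=\pm\frac{1}{\sqrt{2}}(\ve_2-\ve_4)$ (with $\vv^2=\ve_1$, since $(\ve_2-\ve_4)^2=\ve_3-2\ve_2\ve_4+\ve_4^2=\ve_3+2\ve_1-\ve_3=2\ve_1$) and $\vv=\pm\frac{1}{\sqrt{2}}(\ve_2+\ve_4)$ (with $\vv^2=-\ve_1$). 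Combining these with the solutions $\pm\ve_1,\pm\ve_3$ from the regime $a_2=a_4=0$, the full solution set spans all of $\R_4$: the four vectors $\ve_1$, $\ve_3$, $\frac{1}{\sqrt{2}}(\ve_2+\ve_4)$, $\frac{1}{\sqrt{2}}(\ve_2-\ve_4)$ form a basis of $\R_4$, each member of which squares to $\pm\ve_1$. So the inequality ``span has dimension $<4$'' that your plan rests on is false, and the proposal cannot be completed.

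The defect is not in your reasoning but in the statement itself: the paper's proof lists only $\{\pm\ve_1,\pm\ve_3\}$ as solutions, silently dropping the $a_1=a_3=0$ family, and the theorem is incorrect as stated. Indeed $\R_4=\R[\ve_2]$ with $\ve_2^4=-\ve_1$, so $\R_4\cong\R[x]/(x^4+1)\cong\C\times\C$, because $x^4+1=(x^2-\sqrt{2}x+1)(x^2+\sqrt{2}x+1)$ is a product of two irreducible real quadratics; Davenport's algebra $A$ is the commutative algebra generated by $\vi,\vj$ with $\vi^2=\vj^2=-1$, i.e.\ $\C\otimes_{\R}\C$, which the idempotent $\frac{1}{2}(1+\vk)$ also splits as $\C\times\C$. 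The two algebras are therefore isomorphic. Your closing caveat --- that overlooking a family of solutions in the $a_1=a_3=0$ regime could restore four independent elements and collapse the argument --- is precisely what happens, both to your proposal and to the paper's proof.
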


The algebra $\R_4$ is a new example of commutative hypercomplex number systerm.
We believe the following statement is true.
\textit{For each natural number $n\geq 3$, it is not possible to find a basis $\{\vb_j \mid j=1,\cdots, n\}$ for the algebra $\R_n$ with properties that
\begin{align*}
    \vb_j^2=\pm1, j=1,\cdots, n.
\end{align*}}

\section{The Hypercomplex Number System $\R_4$}\label{ss:4d}
In this last section we will list the basic facts of the $4$-dimensional hypercomplex number system $\R_4$.
\begin{enumerate}
    \item $\R_4$ is a real Euclidean space with basis $\{\ve_1,\ve_2,\ve_3, \ve_4\}$.
         An element $\vv=[v_1,v_2,v_3,v_4],v_i\in\R$ in $\R_4$ has the form
        \begin{align*}
            \vv= v_1\ve_1 + v_2\ve_2 + v_3\ve_3 + v_4\ve_4.
        \end{align*}
The distance between $\vu=[u_1,u_2,u_3,u_4]$ and $\vv=[v_1,v_2,v_3,v_4]$ is
        \begin{align*}
            |\vu-\vv| =  \sqrt{(u_1-v_1)^2 + (u_2-v_2)^2 + (u_3-v_3)^2+ (u_4-v_4)^2}.
        \end{align*}

    \item The multiplication table \eqref{eqn:mt} for this basis becomes
\begin{equation*}
\begin{tabular}{|l||l|l|l|l|}
  \hline
  $\circledast$ & $\ve_1$ & $\ve_2$ & $\ve_3$ & $\ve_4$ \\
          \hline
          \hline
  $\ve_1$ & $\ve_1$ &$\ve_2$& $\ve_3$ & $\ve_4$ \\
  $\ve_2$ & $\ve_2$ &$\ve_3$ & $\ve_4$ & -$\ve_1$ \\
  $\ve_3$ & $\ve_3$ &$\ve_4$ & -$\ve_1$ & -$\ve_2$ \\
  $\ve_4$ & $\ve_4$ & -$\ve_1$ &-$\ve_2$ & -$\ve_3$ \\
  \hline
\end{tabular}
\end{equation*}
    \item By Lemma \ref{lm:linear}
        \begin{align*}
            \vu\circledast\vv&=\vv\circledast\vu&=\\
            &= \vv \varsigma (\vu) = \vu \varsigma(\vv)\\
            &= [v_1,v_2,v_3,v_4]
            \left[
              \begin{array}{cccc}
                u_1  &  u_2 & u_3 & u_4\\
                \textrm{-}u_4 &  u_1 & u_2 & u_3\\
                \textrm{-}u_3 &  \textrm{-}u_4 & u_1 & u_2\\
                \textrm{-}u_2 & \textrm{-}u_3 &  \textrm{-}u_4 & u_1\\
              \end{array}
            \right]\\
            &= [u_1,u_2,u_3,u_4]
                \left[
              \begin{array}{cccc}
                v_1  &  v_2 & v_3 & v_4\\
                \textrm{-}v_4 &  v_1 & v_2 & v_3\\
                \textrm{-}v_3 &  \textrm{-}v_4 & v_1 & v_2\\
                \textrm{-}v_2 & \textrm{-}v_3 &  \textrm{-}v_4 & v_1\\
              \end{array}
                \right]
        \end{align*}
    \item Define
        \begin{align*}
            \| \vu \| = \left| \det \varsigma(\vu) \right|.
        \end{align*}
Then
        \begin{align*}
            \| \vu\circledast\vv\|=\| \vu\|\ \| \vv\|.
        \end{align*}

    \item By Example \ref{ex:42}, the zero divisors of $\R_4$ are the union of 2 planes:
    \begin{align*}
       \mathfrak{Z}_{_{\R_4}}= & \{[s,t,-s + \sqrt{2}t,-\sqrt{2}s + t] \mid s,t \in \R \} \\
       & \cup \{[s,t,s - \sqrt{2}t,\sqrt{2}s - t] \mid s,t \in \R \}.
    \end{align*}
    \item Let $E$ be a Borel subset of $\G_4=\R_4\backslash \mathfrak{Z}_{_{\R_4}}$.
    By Theorem \ref{thm:haar}, up to a constant multiplier, the Haar measure $\nu_4$ is,
        \begin{align*}
            \nu_4(E) &= \int_E \frac{1}{\left|\det \varsigma(\vu)\right|} d \mu_4(\vu)\\
            &=\int_E \frac{1}{(u_1^2-u_3^2 + 2u_2u_4)^2 +(u_4^2 - u_2^2 + 2u_1u_3)^2} d \mu_4,
        \end{align*}
    where $\mu_4$ is the Lebesgue measure in $\R^4$.
\end{enumerate}

\bibliographystyle{amsplain}

\begin{thebibliography}{99}

\bibitem{axler}
Axler, S., Bourdon, P. and Ramey, W.,
Harmonic Function Theory, Second Edition, Springer-Verlag New York, 2001

\bibitem{barile}
Barile, Margherita; Moslehian, Mohammad Sal; and Weisstein, Eric W.,  ``Group Algebra." From MathWorld--A Wolfram Web Resource. https://mathworld.wolfram.com/GroupAlgebra.html

\bibitem{davenport}
Davenport, C. M. A, Commutative Hypercomplex Algebra with Associated Function Theory, Clifford Algebras with Numeric and Symbolic Computations (Ed. R. Ablamowicz, P. Lounesto, and J. M. Parra). Boston, Birkh$\ddot{\textrm{u}}$ser, pp. 213-227, 1996

\bibitem{frobenius}
Frobenius, F.,
\"{U}ber lineare Substitutionen und bilineare Formen, Journal f$\ddot{u}$r die reine und angewandte Mathematik 84:1-63 1878

\bibitem{jacobi}
Jacobi, S., On a novel 3D hypercomplex number system,  arXiv:1509.01459v1, 2015

\bibitem{kantor}
Kantor, I. L.; Solodovnikov, A. S.,
Hypercomplex numbers, Springer-Verlag Berlin, New York, 1989

\bibitem{olariu}
Olariu, S, Complex numbers in n dimensions, ELSEVIER, 2002

\bibitem{palais}
Palais,R.S.,
The Classification of Real Division Algebras, American Mathematical Monthly
75(4):366--368,1968

\bibitem{vanderwaerden}
van der Waerden, B. L.,  A History of Algebra from Al-Khwarizmi to Emmy Noether, New York: Springer-Verlag, 1985.

\end{thebibliography}

\end{document}